\newtheorem{teorema}{Theorem}
\newtheorem{lema}[teorema]{Lemma}
\theoremstyle{definition}
\newtheorem{defin}[teorema]{Definition}
\theoremstyle{remark}
\newcommand{\be}{\begin{enumerate}}
\newcommand{\ee}{\end{enumerate}}
\newcommand{\bi}{\begin{itemize}}
\newcommand{\ei}{\end{itemize}}
\newcommand{\NN}{{\mathbb N}}
\newcommand{\CC}{{\mathbb C}}
\newcommand{\FF}{{\mathcal F}}
\newcommand{\om}{\omega}
\newcommand{\bx}{\mathbf{x}}
\newcommand{\bo}{{\mathbf 0}}
\DeclareMathOperator{\Sing}{Sing}
\begin{document}

\title{Analytic classification of a class  of cuspidal foliations in
$(\CC^3,0)$}

%Autor: YO
\author{Percy Fern\'{a}ndez-S\'{a}nchez}
\address[Percy Fern\'{a}ndez]{Dpto. Ciencias - Secci\'{o}n Matem\'{a}ticas, Pontificia Universidad Cat\'{o}lica del Per\'{u}, Av. Universitaria 1801,
San Miguel, Lima 32, Peru}
\email{pefernan@pucp.edu.pe}

\author{Jorge Mozo-Fern\'{a}ndez}
\address[Jorge Mozo Fern\'{a}ndez]{Dpto. \'{A}lgebra, An\'{a}lisis Matem\'{a}tico, Geometr\'{\i}a y Topolog\'{\i}a \\
Facultad de Ciencias, Universidad de Valladolid \\
Campus Miguel Delibes\\
Paseo de Bel\'{e}n, 7\\
47011 Valladolid - Spain}
\email{jmozo@maf.uva.es}

%Otro autor
\author{Hern\'{a}n Neciosup}
\address[Hern\'{a}n Neciosup]{Dpto. Ciencias - Secci\'{o}n Matem\'{a}ticas, Pontificia Universidad Cat\'{o}lica del Per\'{u}, Av. Universitaria 1801,
San Miguel, Lima 32, Peru}
\email{hneciosup@pucp.pe}

\thanks{First and third authors partially supported by the Pontificia Universidad Cat\'{o}lica del Per\'{u} project VRI-DGI 2014-0025. \\ Second author partially supported by the Ministerio de Econom\'{\i}a y Competitividad from Spain, under the Project ``\'{A}lgebra y Geometr\'{\i}a en Din\'{a}mica Real y Compleja III" (Ref.: MTM2013-46337-C2-1-P)}

%\thanks{Gracias de Perico de los Palotes}

%\subjclass[2000]{Primary }
%\keywords{Palabras clave}
\date{\today}
%\dedicatory{Dedico este art\'{\i}culo a mi mam\'{a}}
\begin{abstract}
In this article we study the analytic classification of certain types of quasi-homogeneous cuspidal holomorphic foliations in $(\CC^3,{\bf 0})$ via the essential holonomy defined over one of the components of the exceptional divisor that appears in the reduction of the singularities of the foliation.
\end{abstract}

\maketitle
\section{Introduction} \label{introduccion}
The objective of this paper is to give a contribution to the analytical classification of germs of codimension one holomorphic foliations defined on an ambient space of dimension three. More generally, assume that the dimension of the ambient space is arbitrary, call it $n$.
Let $\FF_1, \FF_2$ be two such germs, generated  by integrable 1-forms $\omega_1, \omega_2$, respectively, holomorphic in a neighbourhood of $\bo\in \CC^n$. We will say that $\FF_1, \FF_2$ are analytically equivalent if there exists a germ of diffeomorphism $\phi:(\CC^n, {\bf 0})\to (\CC^n, {\bf0})$ such that $\phi^*\omega_1\wedge\omega_2=0$.

Let us mention here some of the previous results concerning this subject. When $n=2$, J. Martinet and J.-P. Ramis \cite{MR1,MR2} consider simple (reduced) singularities, both in the saddle-node case and in the resonant case. Concerning non-simple singularities, consider the  nilpotent case (i.e. foliations defined by a 1-form $\omega$ such that its dual vector field has nilpotent, non-zero, linear part), in which these foliations admit a formal normal form $d(y^2+x^n)+x^p.u(x)dy$, where $n\geq3, p\geq2$ \cite{Takens}, and $u(0)\neq0$, and in fact an analytic pre-normal form $\omega=d(y^2+x^n)+A(x,y)(nydx-2xdy)$ (see \cite{Cerveau-Moussu}). These foliations have been the object of study of R. Moussu \cite{R.Moussu2}, D. Cerveau \cite{Cerveau-Moussu}, R. Meziani \cite{Meziani}, M. Berthier, P. Sad \cite{BMS} and E. Str\'{o}\.{z}yna \cite{Strozyna}. It is worth to mention here other previous works that contribute to the analytic classification of holomorphic foliations, as the ones by J.-F. Mattei \cite{Mattei-modules,Mattei}  and  Y. Genzmer \cite{Genzmer}. Other authors focus in the topological classification, that we will not treat in this paper. Among them, let us mention, without trying to be exhaustive, works of D. Mar\'{\i}n and J.-F. Mattei \cite{MarinMattei}, and of R. Rosas \cite{Rosas}.

In dimension $n=3$, simple singularities have been studied by D. Cerveau and J. Mozo in \cite{Cerveau-Mozo}. Concerning non-simple singularities, P. Fern\'{a}ndez and J. Mozo studied in \cite{FM} the case of quasi-ordinary, cuspidal singularities. In these works, the main tool used is, either the holonomy of one of the separatrices of the foliation or, more frequently, the projective holonomy of a certain component of the exceptional divisor that appears after the reduction of the singularities. In order to apply this technique, specially in dimension higher than two, it is of great importance to study the topology of the divisor, as the holonomy group of each component is a representation of its fundamental group.

In this paper, we will apply this technique to a  kind of cuspidal foliations more general than the quasi-ordinary ones studied in \cite{FM}, more precisely, admissible quasi-homogeneous foliation (see Definition \ref{def_quasi_homogeneous} in Section \ref{general}) of generalized surface type. Recall, from \cite{FM2} and \cite{FMNeciosup1}, that a germ $\FF$ of non dicritical foliation in $(\CC^3,{\bf 0})$, generated by an integrable 1-form $\omega$, is a generalized surface if every generically transverse plane section $\varphi:(\CC^2,{\bf 0})\to (\CC^3,{\bf 0})$ (i.e., $\varphi^{*}\omega\not\equiv0 $) defines a generalized curve in the sense of \cite{CLS}. In particular, for generalized surfaces, once its set of separatrices has been reduced, the foliation has only simple singularities, as shown in \cite{FM2} and generalized in \cite{FMNeciosup1} for higher dimensions.

In Section \ref{general}, a cuspidal foliation in $(\CC^3,{\bf 0})$ will be defined as a generalized surface $\FF$ having a separatrix written as $z^2+\varphi(x,y)=0$, in appropriate coordinates. We will consider in this paper the situation in which the separatrix is quasi-homogeneous. The first point would be to find an analytic normal form for these type of foliations, in the style of the ones considered by D. Cerveau and R. Moussu in dimension two. This is the main problem studied in \cite{FMNeciosup2}. In that paper, it is shown that a cuspidal, quasi-homogeneous foliation $\FF$ with a separatrix $z^2+\varphi(x,y)=0$, can be generated by an integrable 1-form
$$\omega=d(z^2+\varphi)+G(\Psi,z)\cdot z\cdot\Psi\cdot\left(2{dz\over z}-{d\varphi\over\varphi}\right),$$
where $\Psi^r=\varphi, \Psi$ is not a power, and $G$ is a germ of holomorphic function in two variables. This is the starting point of the present paper.

Also in Section \ref{general}, the main definitions of the paper are presented. We will work with a concrete reduction of singularities of cuspidal quasi-homogeneous foliations, obtained via the Weierstrass-Jung method. For the sake of completeness, this reduction will be sketched in Section \ref{section3}. In Section \ref{section_toplogy_of_the_divisor}, the topology of the components of the exceptional divisor is studied. As we said before, this is important in order to identify the component of the exceptional divisor  where the holonomy is computed. In particular there is one component such that, once removed the singular locus, it is homeomorphic to $(\CC^*\times\CC )\smallsetminus\mathcal{C}$, where $\mathcal{\mathcal{C}}$ is a certain algebraic plane curve, whose topology will be interesting for us. The fundamental group will be computed in this section.

In order to lift the holonomy of a component of the divisor, a fibration transverse to the foliation outside the separatrices, is constructed in Section \ref{section_holonomy_of_the_essential_component}. Finally, Section \ref{section_cladification}, is devoted to study the main problem of the paper: to classify analytically quasi-homogeneous generalized surfaces following this method. Unfortunately, the answer is not complete, and we must impose some conditions (properties $\wp_1, \wp_2$ of Definition \ref{defini_propiedades_p_1andp_2}), that guarantee the linearization of the holonomy over some ``end component'' of the divisor, that we will call \textit{special components}. Under these conditions, Theorem \ref{thm_clasificacion_analitica} of this section is the main result of the paper, and gives the analytic classification.

As a future work, we would like to study a wider class of foliations. The existing works of Mattei and Genzmer \cite{Mattei-modules, Mattei,Genzmer} in dimension two lead us to think that different techniques should be developed, as for instance, techniques concerning deformations of foliations. We will not follow this method here.

\section{Generalities} \label{general}
In dimension $n\geq2$, a germ of codimension one holomorphic foliation $\FF$ is called \textit{cuspidal} if, besides being generalized hypersurface (see Section \ref{introduccion} and \cite{FM2}), its separatrix is defined in appropriate coordinates $(\bx, z)$, where $\textbf{x}:=(x_1,x_2,\cdots,x_{n-1})$, by a reduced equation $z^2+\varphi(\textbf{x})=0$,  with $\nu_0 (\varphi (\textbf{x}))\geq 2$. In \cite[Thm. 1.4]{FMNeciosup2}  we constructed a pre-normal form for germs of holomorphic foliation of cuspidal type. In particular, if $z^2+\varphi(\textbf{x})=0$ is a quasi-homogeneous hypersurface, we know that there exist analytic coordinates such that a generator of $\FF$ is the 1-form
$$
\om = d(z^2+\varphi) + G(\Psi, z) (z\cdot \Psi)\cdot \left(
2\frac{dz}{z}-\frac{d\varphi}{\varphi}\right) ,
$$
where $\varphi= \Psi^r$ for some $r\in\NN,\;\Psi$ is not a power, and $G$ is a germ of holomorphic function in two variables (\cite[Cor. 4.2]{FMNeciosup2}). \\

In dimension three, there exist analytic coordinates $(x,y,z)$ such that
$$\Psi=x^{n_1}y^{n_2}\displaystyle\prod_{i=1}^{l}(y^p-a_ix^q)^{d_i},$$
where $n_i,p,q\in\mathbb{N}$, and $a_i\in \CC^*$ are pairwise distinct. After \cite[Thm. 2.4]{FMNeciosup2} and \cite[Thm. 11]{FM2}, the reduction of the singularities of this type of foliations follows the same scheme that the desingularization of their separatrix $S$,
$$S:=z^2+x^{n_1}y^{n_2}\displaystyle\prod_{i=1}^{l}(y^p-a_ix^q)^{d_i}=0.$$

A key to prove the main result of this paper (Theorem \ref{thm_clasificacion_analitica}) is the existence of the first integral in the intersections of the components of the exceptional divisor.
Let us mention in dimension two the work \cite{D.Marin-Thesis}, where it is said that the classification technique from R. Moussu \cite{R.Moussu2} and F. Loray \cite{Loray-Thesis} can be used in the quasi-homogeneous framework, \textit{provided that the coordinate axis are not part of this set of separatrices}. i.e., after reduction of singularities, all the separatrices lie on the same component of the exceptional divisor. Because of this, we will assume an analogous hypothesis in dimension three, i.e., $n_1=n_2=0$, %On the other hand, in \cite{Genzmer} this hypothesis is not considered but other techniques need to be used, which will not be addressed here.
and Theorem \ref{thm_clasificacion_analitica} will be proved based on this property. This means that coordinate hyperplanes are not part of its set of separatrices. Let us mention that on the other hand, in \cite{Genzmer} this hypothesis is not considered, but other techniques of a rather different nature have been used, which will not be addressed here. According to this, let us state the following definition:

\begin{defin}\label{def_quasi_homogeneous}
A cuspidal holomorphic foliation, $\FF$, in $(\CC^3,{\bf 0})$ is called {\bf quasi-homogeneous of admissible type}, if its separatrix, in appropriate coordinates, is defined by the equation
$$S=z^2+\displaystyle\prod_{i=1}^{l}(y^p-a_ix^q)^{d_i}=0,$$
where $p,q\geq2,\;a_i\in\CC^*$,  and $a_i\neq a_j,$ if $i\neq j$.
\end{defin}

So, a quasi-homogeneous foliations of admissible type is generated by a 1-form
\begin{equation}\label{forma_quasi_homogeneous}
\Omega=d(z^2+\varphi)+G(\Psi,z)\cdot z\cdot\Psi\left({d\varphi\over\varphi}-2{dz\over z}\right),
\end{equation}
where $\varphi=\Psi^r=\displaystyle\prod_{i=1}^{l}(y^p-a_ix^q)^{d_i},\;p,q\geq2,\; r=\gcd (d_1,\cdots,d_l)$.

In general, let us denote by $\FF_{\omega}$ a holomorphic foliation of codimension one generated by an integrable 1-form $\omega\in\Omega^1(\CC^n,{\bf 0})$.
Not every such 1-form defines a quasi-homogeneous foliation of admissible type, as we have imposed to the definition the condition of being a generalized surface. In general, such a foliation may have more separatrices, or be dicritical. In \cite[Thm. 5.2]{FMNeciosup2} a sufficient condition is stated for being a generalized surface. More precisely, if we write $G(\Psi,z)=\displaystyle\sum_{\alpha,\beta}G_{\alpha,\beta}\Psi^{\alpha}z^{\beta}$, and
$$\nu_{2,r}(G):=\min\left\{{2\alpha+r\beta\over \gcd(2,r)}; G_{\alpha,\beta}\neq0\right\},$$
then, if $\nu_{2,r}(G)\geq{r-2\over \gcd(2,r)}$, the foliation is a generalized surface.

On the other hand, let us observe that if $d_i=1$ for all $i$, $\varphi$ is reduced and the singular locus of $\FF_{\Omega}$ is the origin of coordinates. By Frobenius  singular Theorem \cite{Malgrange}, $\FF_{\Omega}$ admits holomorphic first integral and the study of these foliations is then equivalent to that of the surfaces. We will suppose, at any time, that $d_i>1$ for some $i$, and we will denote $\Sigma_{p,q}^{(d_1,\cdots,d_l)}$ the set of integrable 1-forms  analytically equivalent to a 1-form  as in (\ref{forma_quasi_homogeneous}), with $d_i>1$ for some $i$.

\section{Desingularization of quasi homogeneous foliations}\label{section3}\label{Desingularization}

Let us consider a quasi-homogeneous foliation of admissible type, $\FF_{\Omega}$, generated by an integrable 1-form $\Omega\in\displaystyle \Sigma_{p,q}^{d_1,\cdots,d_l}$, according with the notations of Section \ref{general}. The reduction of the singularities of $\FF_{\Omega}$ is achieved after the reduction of its separatrices. We  use in this paper a precise reduction, namely the one obtained by Weierstrass-Jung method, that we shall sketch here for the sake of completeness. This precise reduction will be useful in the sequel in order to extend the conjugation of the holonomy to a whole neighbourhood of the exceptional divisor.

This reduction will be described in three steps:

{\bf Step I}: The separatrix $S$ being defined by $z^2+\varphi(x,y)=z^2+\displaystyle\prod_{i=1}^{l}(y^p-a_ix^q)^{d_i}=0$, we shall proceed to desingularize first the curve $\varphi(x,y)=0$. This is done algorithmically taking into account its characteristic exponents, more precisely, the continuous fraction expansion of $\displaystyle{p\over q}=[c_0;c_1,\cdots,c_N]$. It is necessary to do $k=\displaystyle\sum_{\nu
=0}^{N}c_{\nu}$ quadratic transformations (point blow-ups), whose composition will be denoted $\pi_I:$

$$\pi_I:(M_I,E_I)\to(\CC^3,{\bf 0}).$$

Denote $\widetilde{\Omega}$ the strict transform of $\Omega$. Locally, in appropriate coordinates, it is given as
$$\widetilde{\Omega}=\displaystyle(z^2+x^ay^bU_{\alpha})\omega_{\alpha}+xy\eta_{\alpha},$$ where $a,b$ are natural numbers, depending on $p,q$ and the chosen chart, $\omega_{\alpha}=mydx+nxdy$ is a linear form, with $m$, $n\in \NN$, $U_{\alpha}$ is a holomorphic function, that in the ``most interesting'' chart obtained after the last blow-up is
$\displaystyle U_{\alpha}=h^r=\left(\prod_{i=1}^l(y^{\delta}-a_i)^{d_i'}\right)^r$, where  $r=\gcd(d_1,\cdots,d_l)$, $d_i'=d_i/r$, $\delta=\gcd (p,q)$ and $\displaystyle\eta_{\alpha}=d\left(z^2+x^ay^bU_{\alpha}\right)+\Delta_{\alpha}\cdot \left(a{dx\over x}+b{dy\over y}+{dU_{\alpha}\over U_{\alpha}}-2{dz\over z}\right)$. Here $\Delta_{\alpha}$  is a certain germ of holomorphic function that we will not precise here.

{\bf Step II:} Consider the foliation defined by $\widetilde{\Omega}$. We will blow-up certain curves biholomorphic to $\mathbb{P}_{\CC}^1$. This process will depend of the nature of integers $a$, $b$, and follows the scheme of the quasi-ordinary case studied in \cite{FM}. More precisely:

{\bf II.a) If $d$ is even,} $a, b$ are also even. Blow-up $(z=y=0)\:\; {a\over 2}$ times and $(z=x=0)\:\; {b\over 2}$ times. The final result is schematized in Figure \ref{figura2}.
   \begin{center}
     \includegraphics[width=11.5cm]{Caso-i-final-segunda-etapa}\\
      \figcaption{$\widetilde{S}$, strict transform by $\pi_{_{II}}$ of $S$. }\label{figura2}
   \end{center}
{\bf II.b) If $d$ is odd, $p$ even and $q$ odd}, a certain number of suitable monoidal transformations, depending on the chart, is necessary to perform. The final result is schematized in Figure \ref{figuraCasoii_1}.
\begin{center}
    \includegraphics[width=10.5cm]{Caso-ii1-final-segunda-etapa}\\
    \figcaption{$\widetilde{S}$, strict transform by $\pi_{II}$ of $S$ (Case II.b)}\label{figuraCasoii_1}
    \end{center}
{\bf II.c) If $d, a$ and $b$ are odd}, after the sequence of monoidal transformations, a final quadratic transformation will be necessary to do, as in the quasi-ordinary case of \cite{FM}. Figure \ref{figure_trans_monoidal_caso_impar} represents the final result in an appropriate chart.
    \begin{center}
    \includegraphics[width=11cm,height=3.5cm]{impar-ultima-componente}\\
    \figcaption{$\widetilde{S}$, strict transform by $\pi_{II}$ of $S$ (Case II.c)}\label{figure_trans_monoidal_caso_impar}
    \end{center}
    We shall denote $\pi_{II}$ the composition of the transformations done in this step.

{\bf Step III}. At the end of Step II, there exist analytic coordinates in which the local equation of the strict transform of the foliation is
$$\Omega_{PQ}=(t^2+h^r)\omega_{PQ}+xy\eta_{PQ},$$
where
$$\begin{array}{rcl}
 \omega_{_{PQ}}    & = & ({pq\over\delta}d)ydx+(nqd)xdy\\
 \eta_{_{PQ}}      & = & d(t^2+h^r)+x^{{pq\over \delta}d'(1-{r\over2})}y^{nqd'(1-{r\over 2})}th.G_{1PQ}\Bigg(r{dh\over h}-2{dt\over t}\Bigg)\\
    G_{1PQ}        & = & G\Big(x^{{pq\over\delta}d'}y^{nqd'}h,x^{{pq\over
                         \delta}{d\over2}}y^{nq{d\over2}}t\Big).\\
    P              & = & {pq\over\delta}d-2\left({p+q\over\delta}-1\right).\\
    Q              & = & nqd-(m+n-1).
\end{array}$$
It is necessary to blow-up the lines $z=0, y=a_i^{1\over \delta}$, according to the nature of each $d_i$. The components of the divisor are, topologically, $\mathcal{A}_j\times \mathbb{D}$, where $\mathcal{A}_j\approx\mathbb{P}_{\CC}^1$ and $\mathbb{D}$ a disc around the origin.

Denote $\widetilde{D}$ the last component generate in Step II, i.e., the main component where the separatrix cuts the exceptional divisor. This component will be called  {\it essential component}.
%Figure \ref{secuencia-explosiones-segun-di} is a schematic way of representing the modification of this last component in this Step.
Note that the modifications done in this step do not alter the topology of this component. This will be precised in Section \ref{section_toplogy_of_the_divisor}.
%\begin{center}
% \includegraphics[width=10cm]{imagenes/secuencia-explosiones-segun-di}\\
% \figcaption{}\label{secuencia-explosiones-segun-di}
% \end{center}

Analogously as in the previous steps, $\pi_{III}$ will denote the composition of the transformations done here.

\section{Topology of the divisor}\label{section_toplogy_of_the_divisor}

Let $\widetilde{\FF}$ be the strict transform of a quasi-homogeneous holomorphic foliation of admissible type via the morphism $\pi:(M,E)\to(\CC^3,{\bf0})$ ($\pi=\pi_{_{I}}\circ\pi_{_{II}}\circ\pi_{_{III}}$), as described in Section \ref{section3}.% of a embedded reduction de $S$ (separatrix of $\FF$).

With the conditions imposed, the singular locus $\mathcal{S}:=\Sing(\widetilde{\FF})$, is an analytic space of dimension $1$  with normal crossings. $\mathcal{S}$ is given by the intersection of the components of the divisor $E$, along with the intersection of the strict transform of $S$ with the divisor. These components are denoted as $D_{\alpha},\; D_{\alpha j},\; \mathcal{A}_j\times \mathbb{D}$ (see Section \ref{section3} for notations).
It will be useful, in order to study the projective holonomy of the foliation, to know their homotopy type, once we have removed the singular locus. Let us describe it briefly in this section.

In the first step of the reduction we have produced several components $D_{\alpha}$. After removing the singular locus they are homotopically equivalent to:
$$D_{\alpha}\smallsetminus\mathcal{S}\approx
\left\{
  \begin{array}{ll}
    \CC\times\CC, & \hbox{if}\; \alpha=1;  \\\\
    \CC\times\CC^*, & \hbox{if}\;1<\alpha\leq c_0+1; \\\\
    \CC^*\times\CC^*, & \hbox{ in other cases.}
  \end{array}
\right.
$$

The components $D_{\alpha j}$ appearing in the second step, removing the singular locus, are homotopically equivalent to $\CC\times\CC^*$, to $\CC^*\times\CC^*,$ to $\CC\times(\CC\smallsetminus\{\text{2 points}\})$, or, in the most interesting case, to $(\CC^*\times\CC)\smallsetminus\mathcal{C}$, where, in coordinates $(y,t), \mathcal{C}$ is the affine curve defined by $t^2-y^{a}.v=0$, where $v$ is a unit, or
$$t^2-\displaystyle\left(\prod_{i=1}^{l}(y^{\delta}-a_i)^{d'_i}\right)^{r}=0,$$
with previous notations.

Finally, Step III produces new components homotopically equivalent to $\CC\times\CC$, $\CC^*\times\CC$ or $(\CC\smallsetminus\{\text{2 points}\} )\times\CC$, that are not relevant for our study. However, we highlight the component $D_{\alpha j}$ (generated in the second step) which is modified now to $\widetilde{D}$ (essential component defined in Section \ref{Desingularization}), but whose topology does not change in this step and consequently $\widetilde{D}\smallsetminus\Sing(\widetilde{\FF}_{\Omega})\thickapprox (\CC^*\times\CC )\smallsetminus\mathcal{C}$.
%, where
%$$\mathcal{C}:=t^2-\displaystyle\left(\prod_{i=1}^{l}(y^{\delta}-a_i)^{d'_i}\right)^{r}=0.$$

The fundamental group of $(\CC^*\times\CC)\smallsetminus\mathcal{C}$ can be computed using the Zariski-Van Kampen method, as described, for instance, in \cite{ACT} (see also \cite{VK} as a classical reference). Even if it is standard enough, for the sake of completeness we shall briefly review here this method. Consider the projection $\rho:\CC^2\to\CC,\;\rho(x,t)=x$.
Let us denote by $\Delta:=\{a_1,a_2,\cdots,a_l\}$ and $\mathcal{L}:=\displaystyle\bigcup_j\rho^{-1}(a_j)$. The restriction \begin{equation}\label{fibracion_localmente_trivial}
\rho:\CC^2\smallsetminus\mathcal{C}\cup\mathcal{L}\to\CC\smallsetminus\Delta,
\end{equation}
is a locally trivial fibration, with fibres isomorphic to $\CC\smallsetminus\{\text{2 points}\}$. So, for every point in $\CC\smallsetminus\Delta$, we obtain the exact sequence
%\begin{equation}\label{secuencia-exacta-de-grupos}
$$
\xymatrix{1\ar[r] & \pi_1\left(\CC\smallsetminus\{\text{2 points}\}\right)\ar[r]^-{i_{*}} &  \pi_1(\CC^2\smallsetminus \mathcal{C}\cup\mathcal{L})  \ar[r]^-{\rho_{*}}& \ar@/_2pc/[l]^-{s_{*}}\pi_1(\CC\smallsetminus\Delta)\ar[r]& 1},
$$%\end{equation}
where $i^*$ and $s^*$ are map induced in homotopy from inclusion and section of $\rho$ respectively.
Let $\mathbb{D}\subset \CC$ be a sufficiently big closed disk, such that $\Delta$ is contained in its interior. Choose a point $\star$ on $\partial \mathbb{D}$. Take a small disk $\mathbb{D}_j$ centered at ${a_j}\in\Delta$ containing no other elements of $\Delta$ and choose a point $R\in\partial\mathbb{D}_j$. Consider a path $\alpha\in\CC\smallsetminus\Delta$ joining $R$ and $\star$, and denote by $\eta_{R,\mathbb{D}_j}$ the closed path based at $R$ that runs counterclockwise along $\partial\mathbb{D}_j$. The homotopy class of the loop $\gamma_j:=\alpha^{-1}.\eta_{R,\mathbb{D}_j}.\alpha$ is called a \textit{meridian} of $a_j$ in $\CC\smallsetminus\Delta$. Then, the collections of meridians in $\CC\smallsetminus\Delta$ (one for each point of $\Delta$) define bases of $\pi_1(\CC\smallsetminus\Delta,\star).$ Similarly we can choose meridians $g_1,g_2$ in a fiber of the restriction (\ref{fibracion_localmente_trivial}) and $\gamma$ a meridian around the straight line $x=0$. It follows that
$$
\pi_1( (\CC^*\times \CC) \smallsetminus \mathcal{C})=\Big\langle g_1,g_2,\gamma;\ g_i^{\sigma^r}=g_i\;\; \wedge \;\; g_i^{\sigma^b}=\gamma^{-1}g_i\gamma\Big\rangle,
$$
where  $g_i^{\sigma^r}$ is the action on $g_i$ of $\gamma_j$ (known as the factorization of the braid monodromy, see \cite{ACT}).

The expression of the group can be simplified obtaining:

$$\pi_1((\CC^*\times \CC ) \smallsetminus \mathcal{C})=
\left\{
  \begin{array}{ll}
    \Big\langle \alpha, \beta,\gamma: \beta\alpha^r=\alpha^r\beta\; \wedge\; \gamma\alpha=\alpha\gamma \Big\rangle, & \hbox{if $r=2m+1$ is odd;} \\\\
    \Big\langle \alpha, \beta,\gamma: \alpha^r=\beta^2\; \wedge\; \gamma\alpha=\alpha\gamma \Big\rangle, & \hbox{if $r=2m$ is even.}
  \end{array}
\right.
$$
where $\alpha:=g_2g_1$ and $\beta:=(g_2g_1)^mg_2$.

\section{Holonomy of the essential component}\label{section_holonomy_of_the_essential_component}
The dynamical behavior of one foliation may be studied in a neighbourhood of a leaf by the representation of its fundamental group, as introduced by C. Ehresmann in 1950. In this study we will mainly follow J.-F. Mattei and R. Moussu \cite{Mattei-Moussu}: let us choose a point ${\bf p}$ of a leaf $L$ and a germ of a transversal section $\Sigma$ in ${\bf p}$. The lifting of a closed path $\gamma$ starting in ${\bf p}$, following the leaves of the foliation, induces germs of diffeomorphisms
$$h_{\gamma}:(\Sigma,{\bf p})\to (\Sigma,{\bf p}),$$
that only depend on the homotopy class of the path. The map $h_{\gamma}$ is called {\it holonomy } of the leaf $L$. The representation of the holonomy of $\pi_1(L,{\bf p})$ is the morphism defined by

$$\begin{array}{cccc}
Hol(L)&:\pi_1(L,{\bf p})&\to         & Diff(\Sigma,{\bf p})\\
      & \gamma          &\longmapsto & h_{\gamma},
\end{array}$$
and the {\it holonomy group} of the foliation along $L$ is the image of this application $Hol(L)$. Different points in the leaf and different transversal sections define conjugated  representations.

In order to lift the path $\gamma$ it is necessary to have a fibration, that is transverse to the foliation. Let us describe it. In general let $\Omega$ be an integrable 1-form in $(\CC^3,{\bf 0})$, $\pi:(M,E)\to (\CC^3,{\bf 0})$ a minimal reduction of singularities of the foliation $\FF_{\Omega}$. Let $\widetilde{\FF}_{\Omega}$ be the strict transform of $\FF_{\Omega}$ under $\pi$ and $D$ a component of the exceptional divisor $E$.

\begin{defin}\label{Def_Hopf_fibration}
A {\bf Hopf fibration adapted to $\FF_{\Omega}$}, $\mathcal{H}_{\FF_{\Omega}}$, is a holomorphic fibration $f:M\to D$ transverse to the foliation $\FF_{\Omega}$, i.e.,
\begin{enumerate}
  \item $f$ is a submersion and $f|_{D}=Id_{D}.$
  \item The fibres $f^{-1}(p)$ of $\mathcal{H}_{\FF_{\Omega}}$ are contained in the separatrices of $\widetilde{\FF}_{\Omega}$, for all $p\in D\cap\Sing(\widetilde{\FF}_{\Omega})$.
  \item The fibres $f^{-1}(p)$ of $\mathcal{H}_{\FF_{\Omega}}$ are transverse to the foliation $\FF_{\Omega}$, for all $p\in D\smallsetminus\Sing(\widetilde{\FF}_{\Omega})$.
\end{enumerate}
\end{defin}
 Consider now the particular case $\Omega\in\Sigma_{p,q}^{(d_1,\cdots,d_l)}$. The vector field
$$\mathcal{X}=px{\partial\over\partial x}+qy{\partial\over \partial y} + {pqd\over 2}z{\partial\over\partial z}. $$
verifies $\Omega(X)=pqd(z^2+\varphi)$, so, it is transverse to $\FF_{\Omega}$ outside the separatrix $S$, and $S$ is a union of trajectories of $\mathcal{X}$ (equivalently, $S$ is invariant by $\mathcal{X}$). The trajectories $\mathcal{X}$ give the fibres of the Hopf fibration adapted to $\Omega$.

Let $\widetilde{D}$ be the essential component of the reduction of singularities of $\FF_{\Omega}$, as defined in Section \ref{Desingularization}.

\begin{defin}
The \textbf{exceptional holonomy group} is the group of holonomy of $\widetilde{D}$. Let us denote it $H_{\Omega,\widetilde{D}}$.
\end{defin}

As a consequence of the results of Section \ref{section_toplogy_of_the_divisor}, $H_{\Omega,\widetilde{D}}$, is generated by elements $h_{\alpha}, h_{\beta}, h_{\gamma}$, which are the holonomy diffeomorphisms of, respectively, paths $\alpha, \beta, \gamma$.

Consider, now, two such foliations $\FF_{\Omega_1}, \FF_{\Omega_2}$, where $\Omega_1,\Omega_2\in\Sigma_{p,q}^{(d_1,\cdots,d_l)}$, with respective exceptional holonomy groups
$$H_{\Omega_1,\widetilde{D}}=\langle h_{\alpha}^{1},h_{\beta}^{1},h_{\gamma}^{1}\rangle, H_{\Omega_2,\widetilde{D}}=\langle h_{\alpha}^{2},h_{\beta}^{2},h_{\gamma}^{2}\rangle$$
represented on a the transverse section. Suppose they are analytically conjugated by an element $\psi\in Diff(\CC,0)$ such that
$$\psi^*(h_{\alpha_i}^1):=\psi^{-1}h_{\alpha_i}^1\psi=h^2_{\alpha_i};\; \alpha_i\in\{\alpha,\beta, \gamma\}.$$
Then we have the following result.

\begin{lema}\label{lema-extension-de-holonomia--}
There exist a fibered diffeomorphism $\phi, \phi(x,{\bf p})=(\varphi(x,{\bf p}),{\bf p})$ between two open neighbourhoods $V_j$ of $\widetilde{D}$ in the space  $(M,E)$ such that
\begin{enumerate}
  \item $\phi$ sends the leaves of the foliation ${\widetilde{\FF}_{\Omega_1}}{|_{V_1}}$ to the leaves of the foliation ${\widetilde{\FF}_{\Omega_2}}{|_{V_2}}$.
  \item The restriction of $\phi$ to the transverse section $\Sigma$ is $\psi$.
\end{enumerate}
\end{lema}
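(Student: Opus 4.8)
The strategy is the classical ``extension of holonomy conjugacy'' argument of Mattei--Moussu, adapted to our three-dimensional setting using the Hopf fibration $\mathcal{H}_{\FF_{\Omega}}$ constructed above. First I would fix, for $j=1,2$, the Hopf fibration $f_j:M\to\widetilde{D}$ adapted to $\Omega_j$, whose fibres are the trajectories of the vector field $\mathcal{X}$; note that the \emph{same} $\mathcal{X}$ works for both foliations, since $\mathcal{X}$ depends only on $(p,q,d)$ and not on $G$. I would then choose the transverse section $\Sigma$ at a base point $\bf p\in\widetilde{D}\smallsetminus\Sing(\widetilde{\FF}_{\Omega})$ to be a plaque of the fibration, so that lifting paths of $\widetilde{D}\smallsetminus\Sing$ along the leaves of $\widetilde{\FF}_{\Omega_j}$ and projecting by $f_j$ gives exactly the holonomy representations $H_{\Omega_j,\widetilde{D}}$ of Section \ref{section_holonomy_of_the_essential_component}.

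The construction of $\phi$ proceeds by transport. Over the open set $\widetilde{D}\smallsetminus\Sing(\widetilde{\FF}_{\Omega})$, which is a $K(\pi,1)$ with $\pi=\pi_1((\CC^*\times\CC)\smallsetminus\mathcal{C})$ as computed above, I would define $\phi$ fibrewise: for a point $m$ in a fibre $f_1^{-1}(q)$, join $\bf p$ to $q$ by a path $\delta$ in $\widetilde{D}\smallsetminus\Sing$, lift $\delta$ through $m$ along $\widetilde{\FF}_{\Omega_1}$ to reach a point of $\Sigma$, apply $\psi$, then lift $\delta$ backwards along $\widetilde{\FF}_{\Omega_2}$ from $\psi(\cdot)\in\Sigma$ and land in $f_2^{-1}(q)$. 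The hypothesis $\psi^{-1}h^1_{\alpha_i}\psi=h^2_{\alpha_i}$ for the generators $\alpha,\beta,\gamma$ guarantees that this is independent of the homotopy class of $\delta$, hence well defined, holomorphic, and a fibered diffeomorphism $\phi(x,{\bf p})=(\varphi(x,{\bf p}),{\bf p})$ onto its image, satisfying $(1)$ and $(2)$ over $\widetilde{D}\smallsetminus\Sing$ by construction. One must check that $\phi$ depends holomorphically on the transverse variable; this follows because the holonomy maps and $\psi$ are holomorphic and the path-lifting depends holomorphically on initial conditions.

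The main obstacle is the extension of $\phi$ across the singular locus $\Sing(\widetilde{\FF}_{\Omega})\cap\widetilde{D}$, i.e.\ to a \emph{full} neighbourhood $V_j$ of $\widetilde{D}$ including the points where the divisor components meet and where $\widetilde{S}$ cuts $\widetilde{D}$. Here I would use that, by the reduction described in Section \ref{section3}, at each such point the foliation is simple (generalized surface), and in fact in suitable local coordinates admits a holomorphic first integral on the intersection of divisor components, as emphasized after Definition \ref{def_quasi_homogeneous}. The strategy is the standard one: near an isolated-type singular point one uses the local product structure of the foliation together with Hartogs-type extension (the complement of a codimension $\geq 2$ analytic set), and along the corners one pieces together the local extensions, checking they agree on overlaps because they are both determined by transporting $\psi$ along the leaves. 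The boundedness needed to invoke Riemann/Hartogs extension comes from the fact that $\varphi(\cdot,{\bf p})$ is a germ of diffeomorphism of $(\CC,0)$ for each $\bf p$ in the dense open set, hence stays uniformly bounded on compact transverse neighbourhoods. Shrinking the $V_j$ appropriately, this yields the desired $\phi$ on a whole neighbourhood of $\widetilde{D}$.

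\textbf{Remark on organization.} I expect the routine verifications (well-definedness from the relations in $\pi_1$, holomorphic dependence, the fibered form of $\phi$) to occupy the first part of the proof, with the genuine work being the extension across $\Sing(\widetilde{\FF}_{\Omega})$; there the key inputs are the precise Weierstrass--Jung reduction of Section \ref{section3}, the transversality properties of the Hopf fibration in Definition \ref{Def_Hopf_fibration}, and the existence of local first integrals at the corners.
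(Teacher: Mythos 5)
The first half of your proposal --- transporting $\psi$ along lifted paths via the Hopf fibration over $\widetilde{D}\smallsetminus\mathcal{S}$, with well-definedness coming from the conjugation of the holonomy generators --- is exactly the paper's construction.

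The gap is in the extension across $\mathcal{S}\cap\widetilde{D}$. Your argument invokes ``Hartogs-type extension on the complement of a codimension $\geq 2$ analytic set'' together with a uniform boundedness claim, but the set on which the transported map $\phi$ fails to be defined is not a thin analytic subset of a fixed neighbourhood of $\widetilde{D}$: as the base point ${\bf p}$ approaches $\mathcal{S}$, the radius of the transverse disc on which the path-lifting stays controlled may shrink to zero, so a priori $\phi$ is only defined away from a full neighbourhood of the fibres over $\mathcal{S}$ (and, even restricted to a fixed polydisc, the locus to be crossed contains the local separatrices, which are codimension one). The asserted uniform boundedness of $\varphi(\cdot,{\bf p})$ near the singular fibres is precisely the hard point and is not a consequence of each $\varphi(\cdot,{\bf p})$ being a germ of diffeomorphism; it depends on the dynamics of the reduced singularity (think of the two-dimensional saddle and saddle-node cases in Mattei--Moussu, where this step is the substance of the proof). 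The paper closes this gap with a specific input that your proposal omits: at every point of $\mathcal{S}\cap\widetilde{D}$ the singularity is simple of dimensional type two or three and $\widetilde{D}$ is a \emph{strong} separatrix there, so the Cerveau--Mozo classification of reduced singularities in dimension $n\geq 3$ applies and yields a local fibered conjugation of the two reduced foliations from the conjugation of the holonomy of $\widetilde{D}$; this local conjugation coincides with $\phi$ off the separatrices, and gluing gives the extension. Your appeal to local first integrals at the corners is not the relevant tool here (first integrals enter later, in the proof of the main theorem, for the other components); without the Cerveau--Mozo step the extension of $\phi$ across $\mathcal{S}$ is not justified.
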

\begin{proof}
Let ${\bf p}\in \mathcal{L}=\widetilde{D}\smallsetminus \mathcal{S}$ and $\gamma_{\bf p}\subset \mathcal{L}$ be a path from ${\bf p}$ to ${\bf p}_j$. We have that the foliation $\FF_{\Omega_j}$ is generically transverse to the Hopf fibration relative to $\widetilde{D}$, $\mathcal{H}_{\FF_{\Omega_j}}$, outside the strict transform of the separatrix $\widetilde{S}$ and of the components $D_{\alpha j}, \mathcal{A}_j$, such that $\widetilde{D}\cap D_{\alpha j}\neq\emptyset$, $\widetilde{D}\cap \mathcal{A}_j\neq\emptyset$.% (ver Fig. \ref{secuencia-explosiones-segun-di}).

The projection $(\CC,0)\times\mathcal{L}\to \mathcal{L}$, is locally a covering map. Then, for each point $(x,{\bf p})\in\CC\times\mathcal{L}$, with $|x|$ small enough, we can consider the lifting $\widetilde{\gamma}^1_{\bf p}$, following a leaf of the foliation $\widetilde{\FF}_{\Omega_1}$, of the path $\gamma_{\bf p}$, such that $\widetilde{\gamma}_{\bf p}^1(0)=(x,{\bf p}).$\\
If $(x_1,{\bf p}_1)=\widetilde{\gamma}^1_{\bf p}(1)$  is the end point of $\widetilde{\gamma}^1_{\bf p}$, we lift the path $\gamma^{-1}_{\bf p}$ to $\widetilde{\gamma}^2_{\bf p}$ in the foliation $\widetilde{\FF}_{\Omega_2}$ such that $\widetilde{\gamma}^{2}_{\bf p}(0)=(\psi(x_1),{\bf p}_2)$.

Let $(x_2,{\bf p})=\widetilde{\gamma}^{2}_{\bf p}(1)$. Define
$$\phi(x,{\bf p})=(x_2,{\bf p})$$
as $\phi_{|\Sigma_1}=\psi,\; \phi(x,{\bf p})$ does not depend on the chosen path.% desde que $\psi^*(h^1_{\alpha_i})=h^2_{\alpha_i}$

Let us observe that we can define $\phi$ as close as we want to the points ${\bf q}\in\mathcal{S}$, in $\widetilde{D}$. In fact, $\widetilde{D}\smallsetminus \mathcal{S}\cong \CC^*\times\CC\smallsetminus \mathcal{C}$, so ${\bf q}\in \mathcal{C}$ or ${\bf q}\in\CC^*$. We can consider
%Take a small disk $\Delta$ centered at ${\bf q}\in\mathcal{S}$ containing no other elements of $\mathcal{S}$ and choose a point $\star\in\partial\Delta$. Consider a path $\alpha\in\mathcal{L}$ joining $\star$ and ${\bf p}$, and denote by $\eta_{\star,\Delta}$ the closed path based at $\star$ that runs counterclockwise along \^{A}$\partial\Delta$. We can consider the homotopy class of the loop $\alpha_{\bf q}:=\alpha^{-1}.\eta_{\star,\Delta}\alpha$ (called a meridian relative to ${\bf q}$, with base point ${\bf p}$),
a meridian relative to ${\bf q}$ with base point ${\bf p}$, (see Section \ref{section_toplogy_of_the_divisor}), and  define the path $\beta_{\bf p}=\alpha_{\bf q}\gamma_{\bf p}$ with starting point ${\bf p}$ and end point ${\bf p}_j$. The path $\gamma^{-1}_{\bf p}\beta_{\bf p}$ is a meridian relative to ${\bf q}$ with base point ${\bf p}_j$, so its homotopy class define an element of $\pi_1(\mathcal{L},{\bf p}_j)$
and
%$$h^1_{\gamma_{\bf p}^{-1}\beta_{\bf p}}=h^1_{\alpha_i}=\psi^*(h^2_{\alpha_i})=\psi^*(\gamma_{\bf p}^{-1}\beta_{\bf p})$$
$$h^1_{\gamma_{\bf p}^{-1}\beta_{\bf p}}=\psi^*(\gamma_{\bf p}^{-1}\beta_{\bf p}).$$

Therefore $\phi$ extends to a neighbourhood of $\widetilde{D}\smallsetminus \mathcal{S}$.

On the other hand, the singular points ${\bf q}\in\mathcal{S}$ are points of intersection of  $\widetilde{D}$ with  the other components of the divisor, and with the separatrix. These points are of dimensional type two or three and in a neighbourhood of  all them, $\widetilde{D}$ is a strong separatrix\footnote{In dimension two, \textit{strong} separatrices around simple singular points are the ones corresponding to non-zero eigenvalues of the linear part. In higher dimension, the separatrix is called \textit{strong} if it corresponds to a strong separatrix in dimension two for a generic plane transversal section.} %  (ver Fig. \ref{secuencia-explosiones-segun-di}).
From \cite{Cerveau-Mozo}, it follows that the conjugation of the holonomy of $\widetilde{D}$ implies the conjugation of the reduced foliations in a neighbourhood of these points. This conjugation coincides, outside of the separatrix, with the diffeomorphism $\phi$, and in  consequence, $\phi$ is extended to a diffeomorphism around  $\widetilde{D}$.
\end{proof}

\section{Analytic classification}\label{section_cladification}

In this section we will study the analytic classification of quasi-homogeneous cuspidal foliations $\FF_{\Omega}$ in $(\CC^3,{\bf 0})$, using as the main tool the lifting of the projective holonomy, as stated in previous sections.

As described in Section \ref{section_toplogy_of_the_divisor}, after the reduction of singularities of a quasi-homogeneous cuspidal holomorphic foliation in $(\CC^3,{\bf 0})$, the first component, $D_1$, of the exceptional divisor $E$, is either
\begin{itemize}
  \item $D_1\smallsetminus \mathcal{S}\simeq \CC^*\times\CC^*$, if the separatrix of $\FF_{\Omega}$ is defined by the equation
$$z^2+x^{n_1}y^{n_2}\prod_{i=1}^{l}\big(y^{p}-a_ix^{q}\big)^{d_i}=0,\ \text{with } (n_1,n_2)\neq (0,0),$$
  \item or $D_1\smallsetminus \mathcal{S}\simeq \CC^2$,  if the separatrix of $\FF_{\Omega}$ is defined by the equation
$$z^2+\prod_{i=1}^{l}\big(y^{p}-a_ix^{q}\big)^{d_i}=0.$$
\end{itemize}

In the admissible case that we are studying, $D_1\smallsetminus \mathcal{S}$ is simply connected, so, the existence of a first integral around it is guaranteed due to the results of Mattei and Moussu \cite{Mattei-Moussu}. In order to be able to extend this first integral, and consequently, to extend the conjugation diffeomorphism, it would be necessary to impose additional technical conditions on one, or possibly several components of the exceptional divisor, that we will be call in the sequel \textit{special components}. These special components will be
those that arise at the end of the monoidal transformations with center the projective lines $D_1\cap S_1$, $D_{c_0+1}\cap S_{c_0+1}$ that we will denote by $\widetilde{D}', \;\widetilde{D}''$ respectively. Note that
$$\widetilde{D}'\smallsetminus \mathcal{S}\simeq\CC\times(\CC\smallsetminus\{2pts\})\approx \widetilde{D}''\smallsetminus \mathcal{S}.$$
This will motivate the following definition:

\begin{defin}\label{defini_propiedades_p_1andp_2}
Let $\Omega\in\Sigma_{p,q}^{{(d_1,\cdots,d_l)}}$ be, we will say that the foliation $\FF_{\Omega}$:

\begin{enumerate}
  \item For $d-$even ({\bf Case i}),  satisfies the property $\wp_1$: if the holonomy of the leaves $\widetilde{D}'\smallsetminus \mathcal{S},\; \widetilde{D}''\smallsetminus \mathcal{S}$, is linearizable.
  \item For $d-$odd ({\bf Case ii.1}),  satisfies the property $\wp_2$: if the holonomy of the leaf $\widetilde{D}''\smallsetminus \mathcal{S}$, is linearizable.
\end{enumerate}
\end{defin}

The following theorem is the main result of this paper.

\begin{teorema}\label{thm_clasificacion_analitica}
Let $\Omega_1, \Omega_2$ be elements of $\Sigma_{p,q}^{(d_1,\cdots,d_l)}$. Consider the foliations $\FF_{\Omega_1}$ and $\FF_{\Omega_2}$ that satisfy one of the properties $\wp_1, \wp_2$ if we are in one of the cases described above, with exceptional holonomy groups $H_{\Omega_i, \tilde{D}}=\langle h_{g_1}^i,h_{g_2}^i,h_{\alpha}^i\rangle, \; i=1,2$. Then, the foliations are analytically conjugated if and only if the triples $(h_{g_1}^i,h_{g_2}^i,h_{\alpha}^i)$ are also analytically conjugated.
\end{teorema}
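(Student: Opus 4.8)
The plan is to prove the two implications separately, the forward direction being essentially formal and the converse being the substance of the theorem.

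\medskip

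\noindent\textbf{The ``only if'' direction.} Suppose $\FF_{\Omega_1}$ and $\FF_{\Omega_2}$ are analytically conjugated by a germ of diffeomorphism $\phi:(\CC^3,{\bf 0})\to(\CC^3,{\bf 0})$. Since the reduction of singularities described in Section \ref{section3} is canonical (it follows the Weierstrass--Jung desingularization of the separatrix, which is intrinsic up to analytic isomorphism), $\phi$ lifts to a diffeomorphism $\widetilde{\phi}:(M,E)\to(M,E)$ conjugating $\widetilde{\FF}_{\Omega_1}$ to $\widetilde{\FF}_{\Omega_2}$ and preserving the exceptional divisor component by component; in particular it preserves the essential component $\widetilde{D}$. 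First I would check that $\widetilde{\phi}$ carries the Hopf fibration $\mathcal{H}_{\FF_{\Omega_1}}$ adapted to $\Omega_1$ to one adapted to $\Omega_2$, up to composing with an automorphism of the fibration (this uses that both fibrations are given by the explicit quasi-homogeneous vector field $\mathcal{X}$, so they differ by a flow that does not affect holonomy). Restricting $\widetilde{\phi}$ to a transverse section $\Sigma$ of $\widetilde{D}$ gives $\psi\in\diff(\CC,0)$, and because holonomy is computed by lifting loops along leaves — a construction $\widetilde{\phi}$ respects — one gets $\psi^*h^1_{\gamma}=h^2_{\gamma}$ for every loop $\gamma$ in $\widetilde{D}\smallsetminus\mathcal{S}$, hence in particular for the generators $g_1,g_2,\alpha$. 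Thus the triples are analytically conjugate by $\psi$.

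\medskip

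\noindent\textbf{The ``if'' direction.} This is where properties $\wp_1,\wp_2$ enter. Assume $\psi\in\diff(\CC,0)$ conjugates the triple $(h^1_{g_1},h^1_{g_2},h^1_{\alpha})$ to $(h^2_{g_1},h^2_{g_2},h^2_{\alpha})$. By Lemma \ref{lema-extension-de-holonomia--}, $\psi$ extends to a fibered diffeomorphism $\phi$ between neighbourhoods $V_1,V_2$ of the essential component $\widetilde{D}$ in $(M,E)$ sending leaves of $\widetilde{\FF}_{\Omega_1}$ to leaves of $\widetilde{\FF}_{\Omega_2}$. The task is to propagate $\phi$ from a neighbourhood of $\widetilde{D}$ to a full neighbourhood of the whole exceptional divisor $E$, and then push down by $\pi$ to a germ at $({\bf 0})\in\CC^3$ (the latter step being automatic by properness of $\pi$ and the fact that $\FF_{\Omega_i}$ has no other separatrices, so $\phi$ does not contract anything one does not want contracted). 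I would organize the propagation as a descending induction along the dual tree of $E$, moving from $\widetilde{D}$ outward through the components generated in Step III, and then back through the components $D_{\alpha j}$ and $D_\alpha$ produced in Steps II and I. At each step there are two possibilities for a new component $D'$ adjacent to an already-treated component $D$: either the holonomy representation of $D'\smallsetminus\mathcal{S}$ is already rigid/finite or, by the first-integral statement emphasized in Section \ref{general} (existence of a first integral on the intersections of components of $E$) together with the Mattei--Moussu theorem, the conjugation of the holonomy of $D\cap D'$ forces the conjugation of the two reduced foliations in a neighbourhood of that curve; this local conjugation agrees with $\phi$ off the separatrices, so $\phi$ extends across $D'$. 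The special components $\widetilde{D}',\widetilde{D}''$, whose complements are $\CC\times(\CC\smallsetminus\{2\,\text{pts}\})$, are exactly the places where this automatic argument fails: their holonomy need not be rigid, so one must \emph{assume} it is linearizable (properties $\wp_1$ in Case i, $\wp_2$ in Case ii.1). Under that hypothesis the holonomy of $\widetilde{D}'$ and/or $\widetilde{D}''$ is conjugate to its linear part on \emph{both} sides, and — invoking again Mattei--Moussu together with the linearizability — the conjugacy $\phi$ extends through these components as well. Once $\phi$ is defined on a neighbourhood of all of $E$, blowing down gives the desired germ of diffeomorphism conjugating $\FF_{\Omega_1}$ and $\FF_{\Omega_2}$.

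\medskip

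\noindent\textbf{Main obstacle.} The delicate point is the propagation across the special components $\widetilde{D}',\widetilde{D}''$: one must show that the fibered diffeomorphism $\phi$ already constructed near $\widetilde{D}$, and independently near $D_1$ where a first integral exists, \emph{glue} to a diffeomorphism near $\widetilde{D}'$ (resp. $\widetilde{D}''$), i.e. that the two extensions are compatible on the overlap. This requires that the linearizing coordinates for $h^1$ and $h^2$ on the special component can be chosen so that the resulting local conjugation matches $\phi$ off the separatrix; concretely one must track how the transverse sections of $\widetilde{D}$, of $D_1$, and of $\widetilde{D}'$ are identified along the leaves, and verify that the germ of conjugation obtained from $\wp_1$/$\wp_2$ coincides there with the one coming from Lemma \ref{lema-extension-de-holonomia--} and from the first integral near $D_1$. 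I expect the bookkeeping of these identifications — essentially a compatibility-of-germs argument on a neighbourhood of a normal-crossings curve — to be the technically heaviest part, and the reason the authors restrict to the cases $d$ even and $d$ odd with $p$ even, $q$ odd, where the combinatorics of the divisor is under control.
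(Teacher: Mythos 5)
Your proposal follows essentially the same route as the paper: the forward implication is the formal one (as in Cerveau--Mozo), and the converse extends the conjugating germ $\psi$ to a neighbourhood of $\widetilde{D}$ via Lemma \ref{lema-extension-de-holonomia--} and the Hopf fibration, then propagates it over the remaining components of $E$ using Mattei--Moussu first integrals (periodic holonomy on the simply connected or $\CC^*\times\CC^*$ components) together with the linearizability hypotheses $\wp_1,\wp_2$ at the special components, before descending to the origin. The only point you gloss over is the final step, which in the paper is not just properness of $\pi$ but a Hartogs-type extension across the codimension-two singular locus; this is minor and easily repaired.
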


\begin{proof}
If the foliations are conjugated, then their essential holonomy groups are conjugated. The arguments are exactly the same that those described in  \cite{Cerveau-Mozo}.

Assume that the holonomies are conjugated via $\Psi$, and let $\widetilde{\FF}_{{\Omega}_i}$ be the respective strict transform of the foliations $\FF_{\Omega_i}$, after its reduction of singularities. From the existence of the Hopf fibration relative to $\widetilde{D}$ (see Figure \ref{secuencia-explosiones-segun-di}) and Lemma \ref{lema-extension-de-holonomia--}, $\Psi$ may be extended to a neighbourhood $V_i\subset (M,E)$ of $\widetilde{D}$ and as a  consequence, we have that $\widetilde{\FF}_{{\Omega}_i}$ are conjugated in $V_i$. Now, we need to conjugate the foliations in a neighbourhood of all the other components of the divisor. We must first check the existence of the first integral around of the singular points outside $\widetilde{D}$. In fact, note that $D_1\smallsetminus \mathcal{S}$ is simply connected, so its holonomy group is trivial. As a consequence, the holonomy of the leaf $D_{2}\smallsetminus \mathcal{S}\approx \CC\times\CC^{*}$, generated by a loop  around $L_1:=D_1\cap D_{2}$, is periodic (see \cite{Mattei-Moussu}). The same argument proves that $D_{\alpha}\smallsetminus \mathcal{S}$ has periodic holonomy, for all $1<\alpha\leq c_0+1$. So, $\widetilde{\FF}_{{\Omega}_i}$  has first integral around
$$L_{\alpha}:=D_{\alpha}\cap D_{\alpha+1},\;1\leq\alpha \leq c_0+1.$$

On the other hand, the leaves $D_{\alpha}\smallsetminus \mathcal{S}\thickapprox \CC^*\times\CC^*$ have periodic holonomy, generated by two loops around the lines
$$L_{\alpha}=D_{\alpha}\cap D_{\alpha+1};$$
$$L_{s_\nu}^{\alpha}=D_{\alpha}\cap D_{s_\nu}.$$

The above arguments prove the existence of the first integral around the lines
$$L_{\alpha j}:=D_{\alpha j}\cap D_{\alpha(j-1)}$$
where $\alpha, j$ are such that $D_{\alpha j}\smallsetminus \mathcal{S}\thickapprox\CC\times\CC^*$ or $\thickapprox \CC^*\times\CC^*$.

Finally, we need to guarantee the existence of the first integral around the points that represent the intersection of the divisor with the strict transform of the separatrix. These points are in the components $D_{\alpha j}$, and:

\begin{itemize}
  \item Either $D_{\alpha j}\smallsetminus \mathcal{S}\thickapprox \CC\times(\CC\smallsetminus\{{\rm 2\;points}\})$,
  \item or $D_{\alpha j}\smallsetminus \mathcal{S}\thickapprox \CC^*\times(\CC\smallsetminus\{{\rm 2\;points}\})$,
  \item or $D_{\alpha j}\smallsetminus \mathcal{S}\thickapprox (\CC^*\times\CC )\smallsetminus \mathcal{C}$.
\end{itemize}

Let us denote by $\mathcal{D}:=D_{\alpha j}$ such that $D_{\alpha j}\smallsetminus \mathcal{S}\thickapprox \CC\times(\CC\smallsetminus\{{\rm 2\;points}\})$.
Note that
$\mathcal{D}\cap\widetilde{S}=\mathcal{L}_1\cup \mathcal{L}_2$ or $\mathcal{D}\cap\widetilde{S}=\mathcal{L}$, where $\widetilde{S}$ represents the strict transform of the separatrix, and $\mathcal{L},\mathcal{L}_1,\mathcal{L}_2$ are projective lines.

Let us suppose now that $d$ is even, then there exist exactly two components of the exceptional divisor such that $\mathcal{D}\cap\widetilde{S}=\mathcal{L}_1\cup \mathcal{L}_2$. The points of $\mathcal{L}_1, \mathcal{L}_2$ are singular points of dimensional type two.

Assuming that $\Omega_j\in\Sigma_{p,q}^{(d_1,\cdots,d_l)}$, $\FF_{\Omega_j}$ satisfies the property $\wp_1$, i.e., the holonomy of the leaves $\mathcal{D}\smallsetminus \mathcal{S}$ is linearizable. This fact guarantees the existence of the first integral around $\mathcal{L}_1,\; \mathcal{L}_2$.

\begin{center}
 \includegraphics[width=10cm]{TorreSuperfiesCaso-Par-existencia-integral-primera}
 \figcaption{Existence of the first integral around the points $\rho_i$ and $\mu_i$}\label{sin nombre}
\end{center}

Now let us prove the existence of the first integral around the points (see Figure \ref{sin nombre}) $$\rho_i:=D_{\alpha j}\cap\widetilde{S}\cap\mathcal{D}.$$
Let $h_{\alpha_i}$ be the holonomy associated to $\alpha_i$, loop around $\mathcal{L}_i$; $h_{\alpha_i}$ is linearizable. Now let us consider $\beta\subset\mathcal{D}\smallsetminus\mathcal{S}\simeq \CC\times(\CC\smallsetminus\{2\;{\rm points}\})$, a loop around $D_{\alpha j}\cap\mathcal{D}$. Note that $\beta\subset \CC\times\{1\;{\rm point}\}\simeq\CC$; so, $\beta$ is homotopically trivial and the  associated holonomy to $\beta$ is $1_{\CC}$, the identity map, $h_{\beta}=1_{\CC}$. Then, the holonomy group of $D_{\alpha j}\smallsetminus \mathcal{S}$ is linearizable and therefore, around $\rho_i$,  $\widetilde{\Omega}_i$ is linearizable. So, there exists a first integral around $\rho_i$. Consider now the points $\mu_i$ (see Figure \ref{sin nombre}), and $h_\gamma$, the holonomy associated to $\gamma$, loop around the projective line $D_{(\alpha+1)j}\cap D_{\alpha j}$. Note that $\gamma$ is such that $\gamma^{-1}$ is a loop around $D_{\alpha-1}\cap D_{\alpha j}$, so $h_{\gamma^{-1}}$ is the holonomy of the leaf $D_{\alpha-1}\smallsetminus \mathcal{S}$, which is periodic. Then around $\mu_i$ there exists a first integral. In a similar way, a first integral can be found around $E\cap \widetilde{S}$.

Let us suppose that $d,q$ are odd and $p$ is even. There exist exactly a component $\mathcal{D}$ for which $\mathcal{D}\cap\widetilde{S}=\mathcal{L}_1\cup \mathcal{L}_2$ and a finite number of components for which we have $\mathcal{D}\cap\widetilde{S}=\mathcal{L}$. It is easy to see that around $\mathcal{L}$ there exist a first integral, and the existence of first integral around the lines $\mathcal{L}_1, \mathcal{L}_2$ follows, from property $\wp_2$, as in the previous case.

Let us see now the extension of $\Psi$ around all the exceptional divisor. The idea is, first, to extend $\Psi$ to a neighborhood of all the components of the divisor in which the separatrix intersects, and finally to extend it to the rest of the components. In both situations we should respect the fibration and the first integral that we have constructed previously. We have that $\widetilde{\FF}_{{\Omega}_j}$ are  analytically conjugated, via $\Psi$, in a neighbourhood of $\widetilde{D}$. We want to extend $\Psi$ to a neighbourhood of the exceptional divisor.
$$E=E_I\cup E_{II}\cup E_{III}.$$

Note that $E_{III}=\bigcup\mathcal{A}_j\times\CC$, and these components are topologically equivalent to ${\mathbb P}_{\CC}^1\times\mathbb{D}$. As $\widetilde{\FF}_{{\Omega}_j}$ are analytically conjugate in a neighbourhood of $\widetilde{D}$, it follows that $\Psi$ is extended to a neighbourhood of $\widetilde{D}\cup E_{III}$

\begin{figure}[h]
\begin{center}
  \includegraphics[width=11.5cm]{extension-analitica-1}\\
 \figcaption{Extension of $\Psi=\Psi_2\Theta\Psi_1^{-1}$ to a neighbourhood of $D_{(\alpha-1)\beta}\cap D_{(\alpha-1)(\beta-1)}$.}\label{extension-analitica}
\end{center}
\end{figure}

On the other hand, for the case that $d$ is even, around ${\bf p}:=D_{\alpha-2}\cap D_{(\alpha-1)\beta}\cap D_{(\alpha-1)(\beta-1)}$, with $\alpha=s_{_N}$ and $\beta={\tilde{Q}_2\over2}$ (where $\tilde{Q}= \left( \frac{pq}{\delta} -nq\right) d-2\left( \frac{p+q}{\delta} -(m+n+1)\right)$, see Figure \ref{extension-analitica}) a first integral is given by the equation
$$F_j:= x^{m_{\nu}}s^{n_{\nu}+2(\epsilon-1)}t^{n_{\nu}-2\epsilon}U_j(x,s,t);\  U_j({\bf 0})\neq0$$
where for simplicity we denote $x:=x_{\alpha-1};\; \epsilon=\beta-1,\; \nu=N\;; j_{\nu}=c_{\nu}-1$.
Let $$\mathcal{B}_c=\Big\{x\in\CC: |x|<c\Big\}$$ be, for $c\in\mathbb{R}^{+}$ large enough, and
$$D_{\varepsilon}:=\{s\in\CC:|s|<\varepsilon_1\}\times\{t\in \CC: |t|<\varepsilon_2\}.$$
Similar arguments as in \cite{Meziani} and \cite{FM}  are now used. First define a diffeomorphism $\Psi_j=(\Psi_{j1},\Psi_{j2},\Psi_{j3})$, on an open set $\mathcal{B}_c\times D_{\varepsilon}$, that transforms the first integral in $x^{m_{\nu}}s^{n_{\nu}+2(\epsilon-1)}t^{n_{\nu}-2\epsilon}$ and respects the fibration, i.e.:
\begin{equation}\label{trasforma-Int.Primera}
\Psi_{j1}^{m_{\nu}}\Psi_{j2}^{n_{\nu}+2(\epsilon-1)}\Psi_{j3}^{n_{\nu}-2\epsilon}=x^{m_{\nu}}s^{n_{\nu}+2(\epsilon-1)}t^{n_{\nu}-2\epsilon}U_j,
\end{equation}
and
\begin{equation}\label{respeta-fibracion}
\begin{array}{ll}
    \Psi_{j3}\Psi_{j1}^{{P\over2}-\widetilde{Q}_2+2}& =tx^{{P\over2}-\widetilde{Q}_2+2}\\\\
    \Psi_{j2}\Psi_{j1}^{\widetilde{Q}_2-{P\over2}} & =sx^{\widetilde{Q}_2-{P\over2}}.
\end{array}
\end{equation}
Consider now $\Theta:=\Psi_2\circ\Psi\circ\Psi_1^{-1}$, defined in the open set $U_{c,\eta,\varepsilon}$. $\Theta$ sends $\Psi_1(\widetilde{\FF}_{{\Omega}_1})$  on $\Psi_2(\widetilde{\FF}_{{\Omega}_2})$, and respects the first integral $$x^{m_{\nu}}s^{n_\nu+2(\epsilon-1)}t^{n_\nu-2\epsilon}.$$
It is defined on a set of the type
$${|x^{m_{\nu}}s^{n_\nu+2(\epsilon-1)}t^{n_\nu-2\epsilon}|<\varepsilon},$$ in the considered charts, and this set intersects the domain of definition of $\Psi$. So $\Psi=\Psi_2\circ\Theta\circ\Psi_1^{-1}$ can be extended to a neighbourhood of $D_{(\alpha-1)\beta}\cap D_{(\alpha-1)(\beta-1)}$.
Repeat the same argument to extend $\Psi$ to a neighbourhood of
$$\big(D_{(\alpha-1)\beta}\cap D_{\alpha_2}\big)\cup\Big(D_{(\alpha-1)\beta}\cap(\cup D_{(\alpha-2)j})\Big)\cup (D_{(\alpha-1)\beta}\cap \widetilde{S}),$$
and in consequence, $\Psi$ can be extended to a neighbourhood of $D_{(\alpha-1)\beta}$. This process can be repeated and extend $\Psi$ to a neighbourhood of the exceptional divisor $E$. So $\FF_{\Omega_i}$ are analytically conjugated, outside of the singular locus of codimension two. Finally, using Hartogs theorem we can obtain the extension of the conjugation around of the origin.

In the case $d, q$ odd and $p$ even, we proceed analogously.

 \end{proof}

 \section*{Acknowledgments}
The authors want to thank the Pontificia Universidad Cat\'{o}lica del Per\'{u} and the Universidad de Valladolid for their hospitality during the visits while preparing this paper.

The authors would like to thank the anonymous referee for many valuable and constructive suggestions, that have helped to improve the paper.

\end{document}